\documentclass[11pt,a4paper]{article}
\usepackage[T1]{fontenc}
\usepackage{lmodern}
\usepackage[margin=26mm,top=25mm,bottom=26mm]{geometry}
\usepackage{amsmath,amssymb,amsthm}
\usepackage{microtype}
\usepackage[hidelinks]{hyperref}
\usepackage{authblk}
\newtheorem{theorem}{Theorem}[section]
\newtheorem{lemma}[theorem]{Lemma}
\newtheorem{corollary}[theorem]{Corollary}
\newcommand{\ind}{\mathbf{1}}
\begin{document}
	\title{\bfseries A proof of the strong Erdős–Frankl–Füredi cover-free conjecture}
	
	\author[1]{\bfseries Yanan Hu \thanks{Email: hg@sit.edu.cn}}
	\author[2]{\bfseries Leilei Zhang\thanks{{Corresponding author.~} Email: mathdzhang@163.com}}
	
	\affil[1]{\small School of Science, Shanghai Institute of Technology, Shanghai 201418, China}
	\affil[2]{\small Faculty of Environment and Information Sciences, Yokohama National University, Yokohama 240-8501, Japan}
	\date{}
	\maketitle
	
	\vspace{-2.5em}
	\begin{abstract}
		A family of nonempty sets is $r$-cover-free if no member is contained in the union of at most $r$ other members. Let $T(r)$ denote the least $n$ for which there is an $r$-cover-free family on $n$ points with more than $n$ members. In this paper, we prove that $T(r)\ge(r+1)^2$ for every integer $r\ge1$, confirming the strong form of the Erd\H{o}s--Frankl--F\"uredi cover-free conjecture.  In particular, $T(r)=(r+1)^2$ whenever $r+1$ is a prime power.
	\end{abstract}
	
	\section{Introduction}
	
	For a positive integer $n$, let $[n]=\{1,\ldots,n\}$. A family $\mathcal F$ of nonempty subsets of $[n]$ is \emph{$r$-cover-free} if $A\nsubseteq\bigcup_{B\in\mathcal B}B$ for every $A\in\mathcal F$ and every $\mathcal B\subseteq\mathcal F\setminus\{A\}$ with $|\mathcal B|\le r$. This notion goes back to Kautz and Singleton~\cite{KS}, who studied binary superimposed codes. Later, Erd\H{o}s, Frankl and F\"uredi~\cite{EFF82,EFF85} studied the corresponding extremal problem for families of sets. Define
	$$
	f(n,r)=\max\bigl\{|\mathcal F|:\mathcal F\subseteq 2^{[n]}
	\text{ is }r\text{-cover-free}\bigr\},
	\qquad
	T(r)=\min\{n:f(n,r)>n\}.
	$$
	Since the family of all singletons is $r$-cover-free, we have $f(n,r)\ge n$. Moreover, adjoining a new element and its singleton to an $r$-cover-free family on $[n]$ with more than $n$ members produces such a family on $[n+1]$ with more than $n+1$ members. Consequently, $f(n,r)>n$ for every $n\ge T(r)$.
	
	An earlier lower bound $T(r)\ge\binom{r+2}{2}$ is due to Bassalygo, as reported by Dyachkov and Rykov~\cite{DR};
	see also~\cite[Proposition~3.4]{EFF85}. Using affine planes, Erd\H{o}s, Frankl and F\"uredi~\cite{EFF85} showed that $T(r)\le q^2$ whenever $q>r$ is a prime power, since the $q^2+q$ lines of an affine plane of order $q$ form an $r$-cover-free family on $q^2$ points. In particular, $T(r)\le (r+1)^2$ when $r+1$ is a prime power. They conjectured that $T(r)=(1+o(1))r^2$ and, more strongly, that $T(r)\ge (r+1)^2$ for every $r\ge1$. They also stated without proof that
	$\liminf_{r\to\infty}T(r)/r^2\ge 5/6$ and that the stronger conjecture holds for $r\le3$.

	Huang and Hwang~\cite{HH} proved the stronger conjecture for $r=4$. Together with the affine-plane construction, these results imply $T(1)=4$, $T(2)=9$, $T(3)=16$ and $T(4)=25$. Chen and Hwang~\cite{CH} proved that $T(5)\ge36$. The stronger conjecture has also been verified for several restricted classes. Huang and Hwang~\cite[Section~3]{HH} proved it for families in which any two distinct members intersect in at most one point, and Shangguan and Ge~\cite[Theorem~I.1]{SG} proved it for families in which every member has size $r+1$. Shangguan and Ge~\cite[Corollary~III.5]{SG} also showed that an $r$-cover-free family with more than $n$ members on $n$ points satisfies $n\ge r^2+r+2$ if every member has size at most $\lfloor5r/3\rfloor$. For general families, Shangguan and Ge~\cite[Theorem~I.2]{SG} proved $T(r)\ge\frac{15+\sqrt{33}}{24}r^2$ by counting pairs contained in exactly one member and applying a matching theorem to the remaining pairs. 
	
	More recently, Huang and Liu~\cite[Theorem~1.2]{HL} proved $T(r)\ge r^2+r+2$ for every $r\ge1$, settling the asymptotic
	conjecture. To supplement private-pair counting, they select collections of non-private pairs satisfying a pseudo-private
	condition and control their overlaps using an auxiliary $C_4$-free bipartite graph. In this paper, we prove $T(r)\ge(r+1)^2$ for every $r\ge1$, confirming the strong form stated in~\cite[Conjecture~3.1]{HL}. Using a consequence of Stehl\'ik's theorem~\cite{Stehlik},
	in the dual form recorded in~\cite[Lemma~2.2]{HL}, we establish a weighted covering inequality. We then apply this inequality
	with weights arising from a linear dependence among incidence vectors.

	\begin{theorem}\label{thm:main}
		For every integer $r\ge1$,   $T(r)\ge(r+1)^2$.
	\end{theorem}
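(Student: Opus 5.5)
We argue by contradiction. Suppose $\mathcal F\subseteq 2^{[n]}$ is $r$-cover-free with $|\mathcal F|>n$ and $n\le (r+1)^2-1$. After deleting members we may assume $|\mathcal F|=n+1$. The incidence vectors $\ind_A\in\mathbb R^n$, $A\in\mathcal F$, are then linearly dependent. Fix a nontrivial relation $\sum_{A}\lambda_A\ind_A=0$, and let $\mathcal S=\{A:\lambda_A\neq0\}$ be its support. Split $\mathcal S=\mathcal S^+\cup\mathcal S^-$ by the sign of $\lambda_A$. For every point $x$,
\[
\sum_{A\ni x,\,A\in\mathcal S^+}\lambda_A=\sum_{A\ni x,\,A\in\mathcal S^-}|\lambda_A|.
\]
Hence each point covered by $\mathcal S^+$ is also covered by $\mathcal S^-$, and conversely. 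In particular every $A\in\mathcal S^+$ lies inside $\bigcup\mathcal S^-$. Cover-freeness then forces every $A\in\mathcal S^+$ to need more than $r$ members of $\mathcal S^-$ to be covered, and symmetrically. The plan is to turn this balanced weighting into a quantitative lower bound on the number of points.

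The main tool is the consequence of Stehl\'ik's theorem in the dual form of \cite[Lemma~2.2]{HL}. I would use it to prove a \emph{weighted covering inequality} of roughly the following shape. Let $A$ be a member of an $r$-cover-free family, and let nonnegative weights $w_B$ be given on the other members, with $\sum_{B\ni x}w_B\ge 1$ for all $x\in A$. Then the weights are large, and each point of $A$ carries "private'' weight in a controlled way. In the unweighted case this is just cover-freeness. The natural target is that the total weight (or a suitable weighted count of points of $A$) is at least about $r+1$, or that $|A|$ is at least the number of points covered by fewer than $r+1$ weighted members. I would try to derive it by treating the covering of $A$ by the $B\cap A$ as a fractional cover of a hypergraph on $A$. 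The Stehl\'ik/\cite[Lemma~2.2]{HL} statement then converts a fractional cover into an integral cover of size comparable to the fractional value, up to the additive loss that lemma permits. A cover by at most $r$ members would contradict cover-freeness.

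Next I would apply this inequality with $w_B=|\lambda_B|/|\lambda_A|$, using the balance identity, after normalizing so that $|\lambda_A|$ is maximal. This yields, for each $A\in\mathcal S$, a lower bound on $|A|$ and on the sizes of the sets $A\setminus\bigcup_{B\neq A}B$ after partial removals. I would then double count incidences $\sum_{A\in\mathcal S}|\lambda_A|\,|A|$ against points, weighted by $\sum_{A\ni x}|\lambda_A|$. The goal is to show that the points covered by $\mathcal S$ number at least $(r+1)^2$, or at least $(r+1)^2$ minus the number of points outside $\bigcup \mathcal S$, balanced against $|\mathcal F\setminus\mathcal S|$. To finish, I would either restrict to $\mathcal S$ and its ground set, which also has more members than points since the relation is minimal on its support (choosing $\lambda$ of minimal support), or invoke the existing bound $T(r)\ge r^2+r+2$ from \cite{HL} to dispose of small cases.

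The main obstacle I expect is the weighted covering inequality itself. The real-valued $\lambda$'s must be converted into an integral statement with the sharp constant $r+1$, not the $r^2+r+2$-type loss. I would first try to prove it for rational weights by replicating members and applying \cite[Lemma~2.2]{HL} directly. Tightness must be checked against affine planes, where equality should hold.
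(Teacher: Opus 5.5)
Your setup agrees with the paper's: the signed relation, the balance identity, and the fact that no $r$ members of $\mathcal S^-$ cover any $A\in\mathcal S^+$. However, the decisive inequality is missing, and the form you propose for it cannot work.

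Cover-freeness controls only the \emph{integral} covering number of $A$ by $\mathcal H_A=\{A\cap B:B\in\mathcal S^-\}$. Your target, that the total weight $\sum_B w_B$ is at least about $r+1$, concerns a \emph{fractional} cover, and the two can differ by far more than an additive constant. For example, take $A=\mathbb F_2^k\setminus\{0\}$ and the sets $\{x:\langle c,x\rangle=1\}$, $c\neq0$. Any $k-1$ of these sets miss a common nonzero point, so the covering number is $k$. Yet uniform weights $2^{1-k}$ form a fractional cover of total weight below $2$. Stehl\'ik's lemma is not a fractional-to-integral conversion. It says only that a critical down-set whose $2$-element members form a connected graph has at least $2\kappa-1$ points.

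Even granting $b\ge(r+1)a_A$ for every $A\in\mathcal S^+$ and the symmetric bounds, summing gives only $|\mathcal S^+|\,|\mathcal S^-|\ge(r+1)^2$. That bounds the number of members, not $n$. Your fallbacks do not close the gap either:
\begin{itemize}
\item The support of a minimal relation need not have more members than points; take $\{1,3\},\{2,4\},\{1,2\},\{3,4\}$.
\item The bound $T(r)\ge r^2+r+2$ falls short by $r-1$ for every $r$, not just in small cases.
\item Replicating members to handle rational weights changes no covering number, so it cannot feed the weights into Stehl\'ik's lemma.
\end{itemize}

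What is needed is a quadratic inequality that trades the covering number against the points weighted by reciprocal coverage. If weights $\lambda_S\ge0$ on $\mathcal H$ sum to $1$ and $p_x=\sum_{S\ni x}\lambda_S>0$ on $U$, then $\kappa(\mathcal H)^2\le\sum_{x\in U}1/p_x$. Write the relation as $\sum_{\mathcal P}a_A\ind_A=\sum_{\mathcal N}b_B\ind_B$, with common coverage $M_x$, and put weights $b_B/b$ on $\mathcal H_A$. The inequality then gives $(r+1)^2\le b\sum_{x\in A}1/M_x$. Multiply by $a_A$ and sum over $A$. The identity $\sum_A a_A\sum_{x\in A}1/M_x=|U|$ (the double count you were reaching for) yields $(r+1)^2a\le b|U|\le bn$. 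The normalization that finishes the argument is exchanging sides so that $a=\sum_A a_A\ge b=\sum_B b_B$, not making one $|\lambda_A|$ maximal.

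The inequality itself is proved by taking a counterexample with $|U|$ minimal. Pass to the downward closure, and delete single points to show the family is critical. If its graph is connected, combine Stehl\'ik's $|U|\ge2\kappa-1$ with an averaging step. This produces a positive-weight member $S_0$ with $\sum_{x\in S_0}1/p_x\ge|U|$. Removing it leaves a reciprocal sum below $(\kappa-1)^2$ on a set whose covering number is at least $\kappa-1$, contradicting minimality. If the graph is disconnected, apply minimality on each component and combine the bounds with Cauchy--Schwarz.
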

	
	Combining Theorem~\ref{thm:main} with the affine-plane construction yields the following corollary.
	
	\begin{corollary}\label{cor:exact}
		If $r+1$ is a prime power, then $T(r)=(r+1)^2$.
	\end{corollary}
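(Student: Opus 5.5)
We argue by contradiction. Suppose $\mathcal F$ is an $r$-cover-free family on $[n]$ with $|\mathcal F|>n$ and $n\le (r+1)^2-1$. Since $|\mathcal F|>n$, the incidence vectors $\ind_A\in\mathbb R^n$, $A\in\mathcal F$, are linearly dependent. Fix a nontrivial relation $\sum_{A\in\mathcal F}\lambda_A\ind_A=0$ and let $\mathcal F'=\{A:\lambda_A\neq0\}$. Every subfamily of an $r$-cover-free family is again $r$-cover-free, so we may restrict to $\mathcal F'$. We may also restrict the ground set to $V=\bigcup_{A\in\mathcal F'}A$, whose size $m$ is at most $n$. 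For each point $x\in V$ the relation reads $\sum_{A\ni x}\lambda_A=0$. Hence every point lies in at least two members of $\mathcal F'$, and the signs of the weights at $x$ are mixed. The weights $w_A=|\lambda_A|$ will feed a weighted counting argument.

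The core step is a \emph{weighted covering inequality}. For each $A\in\mathcal F'$, cover-freeness says that no $r$ other members cover $A$. Equivalently, in the hypergraph on $A$ whose edges are the traces $A\cap B$ ($B\neq A$), every cover of the points of $A$ needs at least $r+1$ edges. Via the dual form of Stehl\'ik's theorem recorded in \cite[Lemma~2.2]{HL}, this gives a fractional statement. I expect it to be: there are weights on the points of $A$ (or on pairs inside $A$) such that every trace $A\cap B$ receives total weight at most $|A|/(r+1)$, or some comparable normalized bound. Summing these local inequalities over $A$, each weighted by $w_A$, and then double counting over pairs $(x,\{A,B\})$ with $x\in A\cap B$, I aim for an inequality of the shape
\[
\sum_{A}w_A\,\bigl((r+1)-\text{(deficiency)}\bigr)\le \sum_{x\in V}\Phi(x),
\]
where $\Phi(x)$ depends only on the weights of the members through $x$.

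Next comes the linear dependence. At each point $x$ the relation $\sum_{A\ni x}\lambda_A=0$ forces $\sum_{A\ni x,\lambda_A>0}w_A=\sum_{A\ni x,\lambda_A<0}w_A$. This balance is what bounds $\Phi(x)$. A point cannot be "private" to a single weighted member, and the pair-type contributions are controlled by a Cauchy--Schwarz or AM--GM estimate using the equal positive and negative masses. The goal is to get $\Phi(x)\le \frac{1}{r+1}\sum_{A\ni x}w_A\cdot(\text{something})$. Summing and comparing with the left side should then yield $m\ge (r+1)^2$, contradicting $m\le n<(r+1)^2$. As a sanity check, equality should be tight for affine planes of order $r+1$, where $\lambda$ is the dependence among the lines of the parallel classes.

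The main obstacle is formulating the local covering inequality so that it is strong enough to be tight at $(r+1)^2$, rather than the weaker $r^2+r+2$ bound. It must also combine cleanly with the signed weights. My first attempt will take the Stehl\'ik-type dual bound on each member, with the point weights chosen proportional to $1/\deg$ in the weighted sense. I will then check whether the balance condition closes the gap exactly. If it does not, I will fall back on choosing $\lambda$ to be extremal, for instance a minimal dependence (a circuit of the incidence matroid), to obtain extra structure on the supports.
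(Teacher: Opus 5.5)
\textbf{There are two genuine gaps.} First, the statement is an equality, and your proposal only attacks the lower bound. You never prove the upper bound $T(r)\le(r+1)^2$, and you never use the hypothesis that $r+1$ is a prime power; affine planes appear only as a ``sanity check'' on tightness. That half is what the corollary adds to Theorem~\ref{thm:main}, and it requires a construction. For a prime power $q=r+1$, an affine plane of order $q$ exists. Its $q^2+q$ lines are $q$-subsets of a $q^2$-point set, and two distinct lines share at most one point. So any $r=q-1$ other lines cover at most $q-1$ points of a given line, and the family is $r$-cover-free. Thus $f(q^2,r)\ge q^2+q>q^2$, i.e.\ $T(r)\le q^2$.

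Second, you could simply quote the lower bound from Theorem~\ref{thm:main}, but your own argument for it is a plan rather than a proof. The central local inequality is left as ``I expect it to be \dots\ or some comparable normalized bound.'' The version you guess does not follow from cover-freeness. A point weighting on $A$ of total $|A|$ that gives every trace weight at most $|A|/(r+1)$ amounts to saying that the fractional covering number $\tau^*$ of the trace hypergraph is at least $r+1$. Cover-freeness only gives the integral bound $\kappa\ge r+1$, and the two can be far apart. For example, take as points the $t$-subsets of $[2t]$ and as edges $E_i=\{X:i\in X\}$; then $\kappa=t+1$ but $\tau^*=2$. Stehl\'ik's theorem does not produce such a fractional statement either.

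The missing idea is an inequality that uses $\kappa$ itself. This is Lemma~\ref{lem:weighted}: $\kappa(\mathcal H)^2\le\sum_x 1/p_x$ for any probability weighting of the members with positive point marginals $p_x$. It is proved by a minimal counterexample, using criticality, Stehl\'ik's bound $|U|\ge2\kappa-1$ in the connected case, and Cauchy--Schwarz over components otherwise.

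It is applied after splitting the dependence as $\sum_{A\in\mathcal P}a_A\ind_A=\sum_{B\in\mathcal N}b_B\ind_B$ with total masses $a\ge b$. For each $A\in\mathcal P$, one uses only the traces $A\cap B$ with $B\in\mathcal N$, weighted by $b_B/b$. The marginals are then exactly $M_x/b$, giving $(r+1)^2\le b\sum_{x\in A}1/M_x$. Multiplying by $a_A$ and summing over $\mathcal P$ collapses the right side to $b|U|\le bn$, whence $n\ge(r+1)^2a/b\ge(r+1)^2$. Your unspecified $\Phi(x)$ step is exactly where this one-sided choice of traces and the normalization $a\ge b$ are needed.
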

	
	The remainder of the paper is organized as follows. In Section~\ref{sec:weighted}, we derive a weighted inequality for covering numbers. In Section~\ref{sec:main}, we apply this inequality to two subfamilies obtained from a linear dependence among the incidence vectors of $\mathcal F$ and prove Theorem~\ref{thm:main}.  
	
	\section{A weighted covering inequality}\label{sec:weighted}
	
	We begin with some definitions and a structural lemma that will be used in the proof.  All families and ground sets are finite. Let $\mathcal H\subseteq2^U$ be a family with union $U$. Its \emph{covering number} $\kappa(\mathcal H)$ is the minimum number of members whose union is $U$. The family $\mathcal H$ is a \emph{down-set} if $S\in\mathcal H$ and $R\subseteq S$ imply $R\in\mathcal H$. For $W\subseteq U$, let $\mathcal H[W]=\{S\in\mathcal H:S\subseteq W\}$. If $\mathcal H$ is a down-set with union $U$, let $G(\mathcal H)$ be the graph on $U$ whose edges are the two-element members of $\mathcal H$. Such a down-set is called \emph{critical} if $\kappa(\mathcal H[U\setminus\{x\}])=\kappa(\mathcal H)-1$ for every $x\in U$.
	
	We use the following consequence of Stehl\'ik's theorem~\cite{Stehlik}, in the dual form recorded by Huang and Liu~\cite[Lemma~2.2]{HL}.
	
	\begin{lemma}[Stehl\'ik]\label{lem:stehlik}
		Let $\mathcal H\subseteq2^U$ be a critical down-set with union $U$. If $G(\mathcal H)$ is connected, then $|U|\ge2\kappa(\mathcal H)-1$.
	\end{lemma}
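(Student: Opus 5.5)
The plan is to read the down-set $\mathcal H$ as the family of independent sets of a "generalized graph" and prove the statement as a Gallai-type theorem. For graphs the lemma is exactly Gallai's theorem: a $k$-colour-critical graph whose complement is connected has at least $2k-1$ vertices. In that dictionary $\mathcal H$ is the family of independent sets, $\kappa(\mathcal H)$ is the chromatic number, criticality is colour-criticality, and $G(\mathcal H)$ is the complement graph. Stehlík's argument does not use that $\mathcal H$ is generated by its pairs, so I would follow it. I argue by contradiction: assume $\mathcal H$ is critical with $G(\mathcal H)$ connected, write $k=\kappa(\mathcal H)$, and suppose $|U|\le 2k-2$.

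\textbf{Step 1 (additivity across a cut).} If $U=U_1\sqcup U_2$ with no edge of $G(\mathcal H)$ between $U_1$ and $U_2$, then no member of $\mathcal H$ meets both parts. Indeed, such a member would contain a crossing pair, and that pair lies in $\mathcal H$ because $\mathcal H$ is a down-set. Hence $\kappa(\mathcal H)=\kappa(\mathcal H[U_1])+\kappa(\mathcal H[U_2])$. Criticality also passes to each $\mathcal H[U_i]$. So the goal is to show that a critical $\mathcal H$ with $|U|\le 2k-2$ admits such a nontrivial split, which contradicts connectivity.

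\textbf{Step 2 (matching bounds).} Let $G=G(\mathcal H)$ and let $\nu$ be its matching number. The edges of a maximum matching together with the uncovered singletons are members of $\mathcal H$, so $k\le|U|-\nu$. With $|U|\le 2k-2$ this gives $\nu\le k-2<|U|/2$, so $G$ has no near-perfect matching. Applying the same bound to $\mathcal H[U\setminus\{x\}]$ and using criticality gives $k-1\le |U|-1-\nu(G-x)$ for every $x$.

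\textbf{Step 3 (Gallai--Edmonds).} Next I apply the Gallai--Edmonds decomposition $U=D\cup A\cup C$ of $G$. Here $D$ is the set of vertices missed by some maximum matching, $A=N(D)\setminus D$, and $C$ is the rest. By Step 2, $G$ is not factor-critical, so $A\cup C\neq\emptyset$ or $D$ has several components. The idea is to build a cover of $U$ using few members:
\begin{itemize}
\item for $C$, use the edges of its perfect matching;
\item for each component of $G[D]$, which is factor-critical, use near-perfect matchings;
\item for $A$, use the edges matching $A$ into distinct components of $D$.
\end{itemize}
Criticality, applied to a vertex of $A$ or to a suitable component of $D$, should then force $A=\emptyset$ and $C=\emptyset$. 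Once that is shown, $G[D]$ has at least two components with no edges between them. Step 1 then contradicts connectivity, or alternatively the inequality chain of Step 2 becomes strict.

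\textbf{Main obstacle.} The hard step is this last one: showing that criticality rules out $A\neq\emptyset$. Plain counting with pairs and singletons only gives $k\le|U|-\nu$, and that is too weak on its own. The argument must use larger members of $\mathcal H$ that survive inside $\mathcal H[U\setminus\{x\}]$. My first attempt would be an induction on $|U|$: delete a vertex $a\in A$, apply the inductive hypothesis to the components of $G-a$, and sum the resulting bounds using Step 1. This follows Stehlík's inductive proof of Gallai's theorem.
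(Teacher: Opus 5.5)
The paper does not prove this lemma; it imports it from Stehlík's theorem in the dual form of Huang and Liu. So the question is whether your sketch is a proof on its own, and it is not. Steps~1 and~2 are correct but routine. Everything rests on Step~3, which you leave open, and the route proposed there cannot work.

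If every member of $\mathcal H$ has size at most two, then $\kappa=|U|-\nu$, and criticality says exactly that every vertex is missed by some maximum matching. Then $D=U$ and your argument reduces to Gallai's lemma on factor-critical graphs. Once larger members are present, however, criticality is not a property of the matchings of $G(\mathcal H)$. For instance, all proper subsets of an $m$-set form a critical down-set with $\kappa=2$, while pairs and singletons only give $\lceil m/2\rceil$.

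In fact, the claim that criticality together with $|U|\le 2k-2$ forces $A=C=\emptyset$ is false. For $C$: take all subsets of size at most $3$ of a $4$-set, plus two isolated points. This is critical with $k=4$ and $|U|=6=2k-2$; here $G=K_4\cup 2K_1$, so $C$ is the $4$-set. For $A$: take disjoint $3$-sets $X,Y$ and a point $a$, and let $\mathcal H_0$ be the down-closure of $X$, $Y$ and all sets $\{a\}\cup(X\setminus\{u\})$, $\{a\}\cup(Y\setminus\{v\})$. It is critical with $\kappa=3$, $G(\mathcal H_0)$ is two copies of $K_4$ glued at $a$, and $a\in A$. Adding three isolated points gives a critical down-set with $k=6$, $|U|=10=2k-2$ and $A=\{a\}$. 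So Step~3 would have to use connectivity in an essential way, and at that point it is the whole lemma.

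Your fallback induction fails as stated, because the restrictions $\mathcal H[U_i]$ to the components of $G-a$ need not be critical. In the example above, $\mathcal H_0[X]$ has $\kappa=1$ and is not critical. The induction can be repaired when $a$ is a cut vertex of the connected graph $G$, using $\mathcal H[U_i\cup\{a\}]$ instead:
\begin{itemize}
\item a member containing $a$ meets at most one $U_i$;
\item a short count, using criticality of $\mathcal H$ at $a$ and at each $z\in U_i$, shows that $\mathcal H[U_i\cup\{a\}]$ is critical, has connected graph, and has covering number $k_i+1$, where $k_i=\kappa(\mathcal H[U_i])$ and $\sum_i k_i=k-1$;
\item induction then gives $|U_i|\ge 2k_i$, hence $|U|\ge 2k-1$.
\end{itemize}
This still leaves the case where $G$ is $2$-connected, and for that your proposal has no argument. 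What is missing is the right generalization of Gallai's lemma from matchings to partitions into members of $\mathcal H$. Stehlík's theorem is, in colouring language, exactly such a statement: for every $x\in U$, the set $U\setminus\{x\}$ can be partitioned into $\kappa(\mathcal H)-1$ members of size at least two. This is the analogue of factor-criticality, and it yields $|U|-1\ge 2(\kappa(\mathcal H)-1)$ at once.
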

	
	The following weighted inequality is the main tool in the proof of Theorem~\ref{thm:main}.
	
	\begin{lemma}\label{lem:weighted}
		Let $\mathcal H\subseteq2^U$ be a family with union $U\ne\emptyset$. Assign weights $\lambda_S\ge0$ to its members such that $\sum_{S\in\mathcal H}\lambda_S=1$. For $x\in U$, let $p_x=\sum_{S\in\mathcal H:\,x\in S}\lambda_S$, and suppose that $p_x>0$ for every $x\in U$. Then
		\begin{equation}\label{eq:weighted}
			\kappa(\mathcal H)^2\le\sum_{x\in U}\frac{1}{p_x}.
		\end{equation}
		Empty members and members of weight zero are allowed.
	\end{lemma}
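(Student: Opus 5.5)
The plan is to reduce, in three steps, to the setting of Lemma~\ref{lem:stehlik}: a critical down-set with connected graph. Then $|U|\ge 2\kappa-1$ should be combined with a Cauchy--Schwarz estimate, carried by an induction on $|U|$. I have not found the argument for this last, connected, step; it is the main obstacle.

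\emph{Step 1: down-closure.} Replace $\mathcal H$ by its down-closure $\mathcal D$. A cover of $U$ by members of $\mathcal D$ can be enlarged to a cover by members of $\mathcal H$, so $\kappa(\mathcal D)=\kappa(\mathcal H)$. Keep the weights on the original members, so the $p_x$ are unchanged. All singletons $\{x\}$ with $x\in U$ lie in $\mathcal D$.

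\emph{Step 2: passing to a critical restriction.} Choose $W\subseteq U$ minimal with $\kappa(\mathcal D[W])=\kappa(\mathcal D)$. Deleting one point lowers $\kappa$ by at most $1$, since one may add that singleton back. So minimality makes $\mathcal D[W]$ critical. Give the member $S\cap W$ of $\mathcal D[W]$ the weight $\lambda_S$, merging coinciding members; this is allowed because empty and zero-weight members are permitted. The total weight is still $1$, and $p_x$ is unchanged for $x\in W$. Since $\sum_{x\in W}1/p_x\le\sum_{x\in U}1/p_x$, it suffices to prove \eqref{eq:weighted} for critical down-sets.

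\emph{Step 3: reduction to connected $G(\mathcal D)$.} If a member $S$ met two components of $G(\mathcal D)$ in points $x$ and $y$, then $\{x,y\}\in\mathcal D$ would be an edge joining them, which is impossible. Hence every nonempty member lies inside one component $C_i$, and $\kappa=\sum_i\kappa_i$, where $\kappa_i$ is the covering number of $\mathcal D[C_i]$. This restriction is again critical, with connected graph.

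Let $m_i$ be the total weight of members lying in $C_i$, so $\sum_i m_i\le1$. Rescaling by $1/m_i$ gives a probability weighting on $C_i$ with point masses $p_x/m_i$. The connected case then yields
\[
\kappa_i^2\le m_i\sum_{x\in C_i}\frac1{p_x}.
\]
Cauchy--Schwarz gives
\[
\sum_{x\in U}\frac1{p_x}\ \ge\ \sum_i\frac{\kappa_i^2}{m_i}\ \ge\ \frac{\bigl(\sum_i\kappa_i\bigr)^2}{\sum_i m_i}\ \ge\ \kappa^2 .
\]
So it remains to treat a critical down-set $\mathcal D$ with $G(\mathcal D)$ connected.

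\emph{Step 4: the connected critical case.} Here Lemma~\ref{lem:stehlik} gives $|U|\ge2\kappa-1$. The naive bound
\[
\sum_{x\in U}\frac1{p_x}\ \ge\ \frac{|U|^2}{\sum_x p_x}\ =\ \frac{|U|^2}{\sum_S\lambda_S|S|}
\]
is not enough. The only member-size bound I have is $|S|\le|U|-\kappa+1$: the set $U\setminus S$ still needs at least $\kappa-1$ members, hence has at least $\kappa-1$ points. With $|U|=2\kappa-1$ this gives only about $4\kappa^2/(\kappa+1)$, not $\kappa^2$, so sizes alone do not suffice.

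I would therefore run an induction on $|U|$, proving the full statement for all families with smaller union. Pick a point $x$ with $p_x$ minimal. By criticality, $\kappa(\mathcal D[U\setminus\{x\}])=\kappa-1$. Restrict the weights to $U\setminus\{x\}$ (each member $S$ becomes $S\setminus\{x\}$), and renormalize by the mass $1-\mu$, where $\mu$ is the total weight of $\{x\}$-type members that become empty. The induction hypothesis then gives
\[
\sum_{y\ne x}\frac1{p_y}\ \ge\ (\kappa-1)^2 .
\]

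It remains to show $1/p_x\ge 2\kappa-1$, that is, $p_x\le 1/(2\kappa-1)$. Minimality of $p_x$ gives $p_x\le\sum_y p_y/|U|$, which suffices only if the average member size is at most $|U|/(2\kappa-1)$. This is where Lemma~\ref{lem:stehlik} must be used more cleverly. One option is to apply it to suitable critical restrictions $\mathcal D[U\setminus S]$, bounding $|U\setminus S|$ from below by roughly $2\kappa-3$. Another is to select $x$ using the weights rather than by minimality of $p_x$. Making this step close exactly to $\kappa^2$, rather than up to lower-order losses, is the main obstacle in the plan.
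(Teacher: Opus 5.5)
Your Steps 1--3 follow the paper's argument: down-closure, criticality by minimality, and Cauchy--Schwarz over the components of $G(\mathcal D)$. One small fix is needed there: $m_i$ must be the weight of the \emph{nonempty} members inside $C_i$. The empty set lies in every component, and counting it repeatedly can break $\sum_i m_i\le 1$.

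The genuine gap is where you say it is: the connected critical case is not done, and the point-deletion route you sketch cannot close it. Deleting one point lowers $\kappa$ by one but lowers $\sum 1/p_x$ only by $1/p_x$, so you need a point with $p_x\le 1/(2\kappa-1)$. Such a point need not exist. Take the down-closure of the edges of an odd cycle on $2\kappa-1$ vertices, which is critical and connected with $|U|=2\kappa-1$, and put uniform weights on the edges. Then every point has $p_x=2/(2\kappa-1)$, so no choice of $x$ works.

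The missing idea is to delete a whole member, chosen by averaging, rather than a single point. Since
\[
\sum_{S}\lambda_S\sum_{x\in S}\frac1{p_x}=\sum_{x\in U}\frac1{p_x}\sum_{S\ni x}\lambda_S=|U|
\]
and $\sum_S\lambda_S=1$, some $S_0$ with $\lambda_{S_0}>0$ satisfies $\sum_{x\in S_0}1/p_x\ge |U|\ge 2\kappa-1$, the last step by Lemma~\ref{lem:stehlik}.

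You may assume $\kappa\ge2$, since $\kappa=1$ is trivial. Then $\emptyset\ne S_0\ne U$. Put $W=U\setminus S_0$ and give each $R\in\mathcal D[W]$ the weight $\sum_{S:\,S\cap W=R}\lambda_S$. This keeps every $p_x$ for $x\in W$; no renormalization is needed, because empty members are allowed. A cover of $W$ together with $S_0$ covers $U$, so $\kappa(\mathcal D[W])\ge\kappa-1$. The induction hypothesis then gives $\sum_{x\in W}1/p_x\ge(\kappa-1)^2$, and hence
\[
\sum_{x\in U}\frac1{p_x}\ge(\kappa-1)^2+(2\kappa-1)=\kappa^2 .
\]
So Stehl\'ik's bound enters as the reciprocal mass of one deleted member, not as a bound at a single point. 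This is exactly how the paper closes Case~1.
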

	
	\begin{proof}
		Suppose, to the contrary, that the lemma is false, and choose a counterexample with $|U|$ minimum. Let 
		\begin{equation*}
			m=|U|,\qquad k=\kappa(\mathcal H),\qquad C=\sum_{x\in U}\frac{1}{p_x}. 
		\end{equation*}
		Then $C<k^2$. Since $p_x\le1$ for every $x\in U$, we have $C\ge m\ge1$, and hence $k\ge2$.
		
		We may assume that $\mathcal H$ is a down-set. Indeed, taking its downward closure and assigning weight zero to each new member preserves every $p_x$. It also preserves the covering number: the original covers remain available, and each new member in a cover can be replaced by an original member containing it. In particular, $\{x\}\in\mathcal H$ for every $x\in U$, so $k\le m$.
		
		For a nonempty set $W\subseteq U$, assign to each $R\in\mathcal H[W]$ the weight 
		$$\lambda_R^W=\sum_{S\in\mathcal H:\,S\cap W=R}\lambda_S. $$
		Since $\mathcal H$ is a down-set, every intersection $S\cap W$ belongs to $\mathcal H[W]$. Thus these weights sum to one, including the weight at the empty set, and their sum at each $x\in W$ is $p_x$. By the choice of $U$, inequality~\eqref{eq:weighted} holds for these restricted weights whenever $W$ is a nonempty proper subset of $U$.
		
		We first show that $\mathcal H$ is critical. Fix $x\in U$. Restricting a minimum cover of $U$ and adding $\{x\}$ to a minimum cover of $U\setminus\{x\}$ show that $k-1\le\kappa(\mathcal H[U\setminus\{x\}])\le k$. Since $m\ge k\ge2$, the restricted ground set is nonempty. If its covering number were $k$, its reciprocal sum $C-1/p_x<k^2$ would contradict the choice of $U$. Hence $\kappa(\mathcal H[U\setminus\{x\}])=k-1$, as required.
		
		We distinguish the following two cases according to whether $G(\mathcal H)$ is connected.
		
		\noindent\textbf{Case 1.}  $G(\mathcal H)$ is connected. 
		
		By Lemma~\ref{lem:stehlik}, we have $m\ge2k-1$. Since
		\[
		\sum_{S\in\mathcal H}\lambda_S\sum_{x\in S}\frac1{p_x}
		=\sum_{x\in U}\frac1{p_x}
		\sum_{\substack{S\in\mathcal H\\x\in S}}\lambda_S
		=m,
		\]
		some member $S_0$ of positive weight satisfies $\sum_{x\in S_0}1/p_x\ge m$. Thus $S_0\ne\emptyset$. Also, $S_0\ne U$, since $k\ge2$.
		
		Let $W=U\setminus S_0$ and $k'=\kappa(\mathcal H[W])$. A minimum cover of $W$, together with $S_0$, covers $U$, so $k'\ge k-1$. However,
		\[
		\sum_{x\in W}\frac1{p_x}
		=C-\sum_{x\in S_0}\frac1{p_x}
		\le C-m
		<k^2-(2k-1)
		=(k-1)^2
		\le(k')^2,
		\]
		contrary to the choice of $U$.
		
		\noindent\textbf{Case 2.} $G(\mathcal H)$ is disconnected. 
		
		Let $U_1,\ldots,U_s$ be the vertex sets of its components, where $s\ge2$, and let $k_j=\kappa(\mathcal H[U_j])$. Every nonempty member of $\mathcal H$ lies in one component. Otherwise it would contain points $x$ and $y$ in different components, and downward closure would give $\{x,y\}\in\mathcal H$, a contradiction. Therefore $k=\sum_{j=1}^s k_j$, since each component must be covered separately.
		
		For each $j$, let $\rho_j$ be the total weight of the nonempty members contained in $U_j$. Since $p_x>0$ for every $x\in U_j$, we have $\rho_j>0$. Also, $\sum_{j=1}^s\rho_j\le1$, since the empty set may have positive weight. On $\mathcal H[U_j]$, divide the weight of each nonempty member by $\rho_j$ and assign weight zero to the empty set. These weights sum to one, and their sum at each $x\in U_j$ is $p_x/\rho_j$. Since $U_j$ is a nonempty proper subset of $U$, the choice of $U$ implies $\sum_{x\in U_j}\rho_j/p_x\ge k_j^2$. By the Cauchy--Schwarz inequality,
		$$
		\begin{aligned}
			C=\sum_{j=1}^s\frac1{\rho_j}
			\sum_{x\in U_j}\frac{\rho_j}{p_x}
			\ge\sum_{j=1}^s\frac{k_j^2}{\rho_j}\ge\frac{(\sum_{j=1}^s k_j)^2}{\sum_{j=1}^s\rho_j}
			\ge k^2,
		\end{aligned}
		$$
		a contradiction. This completes the proof.
	\end{proof}
	
	\section{Proof of the main theorem}\label{sec:main}
	
	\begin{proof}[Proof of Theorem~\ref{thm:main}]
		Let $n$ and $r$ be positive integers, and let $\mathcal F$ be an $r$-cover-free family of nonempty subsets of $[n]$
		with $|\mathcal F|>n$. We show that $n\ge(r+1)^2$. 	For $A\subseteq[n]$, let $\ind_A\in\mathbb R^n$ denote its incidence vector, whose $x$-coordinate is $1$ if $x\in A$ and $0$ otherwise. Since $|\mathcal F|>n$, the vectors $\ind_A$, $A\in\mathcal F$, are linearly dependent. These vectors are nonzero and have nonnegative coordinates, so any nontrivial linear relation among them has both positive and negative coefficients. Hence
		\begin{equation}\label{eq:balance}
			\sum_{A\in\mathcal P}a_A\ind_A
			=\sum_{B\in\mathcal N}b_B\ind_B,
		\end{equation}
		where $\mathcal P$ and $\mathcal N$ are disjoint nonempty subfamilies of $\mathcal F$, and all coefficients are positive. Let $a=\sum_{A\in\mathcal P}a_A$ and $b=\sum_{B\in\mathcal N}b_B$. By exchanging the two sides if necessary, we may assume that $a\ge b>0$.
		
		Since all coefficients in~\eqref{eq:balance} are positive, $\mathcal P$ and $\mathcal N$ have the same union, say $U$. For each $x\in U$, let
		\[
		M_x=\sum_{\substack{A\in\mathcal P\\x\in A}}a_A
		=\sum_{\substack{B\in\mathcal N\\x\in B}}b_B>0.
		\]
		
		Fix $A\in\mathcal P$ and let $\mathcal H_A=\{A\cap B:B\in\mathcal N\}$. The family $\mathcal H_A$ covers $A$, since $M_x>0$ for every $x\in A$. We claim that $\kappa(\mathcal H_A)\ge r+1$. Otherwise, at most $r$ members of $\mathcal N$ would cover $A$. Since $\mathcal P\cap\mathcal N=\varnothing$, these members are all distinct from $A$, contrary to the $r$-cover-free condition.
		
		Assign to each $S\in\mathcal H_A$ the weight $\lambda_S=b^{-1}\sum_{B\in\mathcal N:\,A\cap B=S}b_B$. Thus equal intersections have their weights added, and the empty intersection, if present, is included. These weights sum to one, and for each $x\in A$, their sum over the members containing $x$ is $M_x/b>0$. By Lemma~\ref{lem:weighted},
		\[
		(r+1)^2\le\kappa(\mathcal H_A)^2
		\le b\sum_{x\in A}\frac1{M_x}.
		\]
		Multiplying by $a_A$ and summing over $A\in\mathcal P$, we obtain
		\[
		\begin{aligned}
			(r+1)^2a
			\le b\sum_{A\in\mathcal P}a_A\sum_{x\in A}\frac1{M_x}
			=b\sum_{x\in U}\frac{\sum_{A\in\mathcal P:\,x\in A}a_A}{M_x}
			=b|U|\le bn.
		\end{aligned}
		\]
		Since $a\ge b>0$, it follows that $n\ge(r+1)^2a/b\ge(r+1)^2$. This completes the proof.
	\end{proof}

	\begin{proof}[Proof of Corollary~\ref{cor:exact}]
		Let $q=r+1$ and consider the family of all lines of an affine plane of order $q$, as in~\cite{EFF85}. There are $q^2$ points and $q^2+q$ lines. Each line contains $q$ points, and any two distinct lines have at most one common point. Thus any $r=q-1$ other lines cover at most $q-1$ points of a fixed line, so the family is $r$-cover-free. Consequently, $T(r)\le q^2=(r+1)^2$. Together with Theorem~\ref{thm:main}, this proves that $T(r)=(r+1)^2$ whenever $r+1$ is a prime power. In particular, the lower bound is attained for infinitely many values of $r$, so no positive constant can be added to it for all $r$.
	\end{proof}

	\section*{Declaration on the Use of AI}
	
	At an early stage of this work, AI tools were used to assist in  developing  proofs. All results and proofs were written by the authors, who take full responsibility for the content of this work.
	
	\section*{Acknowledgments}
	
	Yanan Hu’s work was supported by the Science and Technology Commission of Shanghai Municipality Grant Number 25ZR1402474. Leilei Zhang’s work was supported by the National Natural Science Foundation of China (Grant Nos. 12601664, 12571365), JSPS KAKENHI (Grant No.~25KF0036), the Hubei Provincial Natural Science Foundation of China (Grant No.~2025AFB309), the China Postdoctoral Science Foundation (Grant No.~2025M773113), the Fundamental Research Funds for the Central Universities, Central China Normal University (Grant No.~CCNU24XJ026).  
	
	\section*{ Declaration}
	
	\noindent\textbf{Conflict~of~interest}
	The authors declare that they have no known competing financial interests or personal relationships that could have appeared to influence the work reported in this paper.
	
	\noindent\textbf{Data~availability}
	No data were used for the research described in this article.
	
\end{document}